\theoremstyle{plain}
\newtheorem{theorem}{Theorem}
\newtheorem{proposition}{Proposition}
\theoremstyle{definition}
\newtheorem{definition}{Definition}
\begin{document}

\begin{center}
{\huge Almost zip Bezout domain}
\end{center}
\vskip 0.1cm \centerline{{\Large Bohdan Zabavsky, \; Oleh Romaniv}}

\vskip 0.3cm

\centerline{\footnotesize{Department of Mechanics and Mathematics, Ivan Franko National University}}
\centerline{\footnotesize{Lviv, 79000, Ukraine}}
 \centerline{\footnotesize{zabavskii@gmail.com, \; oleh.romaniv@lnu.edu.ua}}
\vskip 0.5cm

\centerline{\footnotesize{March, 2019}}
\vskip 0.7cm

\footnotesize{\noindent\textbf{Abstract:} \textit{J. Zelmanowitz introduced the concept of ring, which we call zip rings. In this paper we characterize a commutative Bezout domain whose finite homomorphic images are zip rings modulo its nilradical.} }

\vspace{1ex}
\footnotesize{\noindent\textbf{Key words and phrases:} \textit{Bezout ring; elementary divisor ring; zip ring; $J$-Noetherian domain.}

}

\vspace{1ex}
\noindent{\textbf{Mathematics Subject Classification}}: 06F20, 13F99.

\vspace{1,5truecm}

\normalsize

\section{Introduction}

All rings considered will be commutative with identity. A ring is a Bezout ring if every finitely generated ideal is principal. I.~Kaplansky \cite{2KaplEDMod} defined the class of elementary divisor rings as rings $R$ for which every matrix $A$ over $R$ admits a diagonal reduction, that is there exist invertible matrices $P$ and $Q$ such that $PAD$ is a diagonal matrix $D=(d_i)$ with the property that every $d_i$ is a divisor $d_{i+1}$. B.~Zabavsky defined  fractionally regular rings as rings $R$ such that for which every nonzero and nonunit element a from $R$ the classical quotient ring $Q_{cl}(R/\mathrm{rad}(aR))$ is regular, where $\mathrm{rad}(aR)$ is nilradical of $aR$ \cite{3Zabavsky}. We say that the ring $R$ has stable range 2 if whenever $aR+bR+cR=R$, then there are $\lambda,\mu\in R$ such that $(a+c\lambda)R+(b+c\mu)R=R$. We say $R$ is semi-prime if $\mathrm{rad}(R)=\{0\}$, where $\mathrm{rad}(R)$ is the nilradical of the ring $R$. Obviously, rings in which nonzero principal ideal has only finitely many minimal prime are examples of fractionally regular rings \cite{5Anderson}.

 An ideal $I$ of a ring $R$ is called a $J$-radical if it is an intersection of maximal ideals, or, equivalently, if $R/I$
has zero Jacobson radical.  We call $R$ $J$-Noetherian if it satisfies the ascending chain condition on $J$-radical ideals.

For every ideal $I$ i $R$ we define the annihilator of $I$ by $I^\bot=\{x\in R\mid ix=0 \;  \forall i\in I\}$.

Following C.~Faith \cite{4Faith} a ring $R$ is zip if $I$ is an ideal and if $I^\bot=\{0\}$ than $I_0^\bot=\{0\}$ for a finitely generated ideal $I_0\subset I$. An ideal $I$ of a ring $R$ is dense if its annihilator is zero. Thus $I$ is a dense ideal if and only if it is a faithful $R$-module. A ring $R$ is a Kasch ring if $I^\bot\ne \{0\}$ for any ideal $I\ne R$.

Let $R$ be a ring. Then the ring $R$ has finite Goldie dimension if it contains a direct sum of  finite number of nonzero ideals. A ring $R$ is called a Goldie ring if it has finite Goldie dimension and satisfies the ascending chain condition for annihilators \cite{4Faith,6Kaplansky,1Zelmanowitz}. By~\cite{4Faith} we have the following  result.

\begin{theorem} \cite{4Faith} \label{theor-1}
    Semiprime commutative ring $R$ is zip if and only if $R$ is a Goldie ring.
\end{theorem}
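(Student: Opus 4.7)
The plan is to prove the two implications separately via two elementary observations. First, since $R$ is semiprime and commutative, for every ideal $I \subseteq R$ the ideal $I + I^\bot$ is dense: if $y$ annihilates $I + I^\bot$, then $y \in I^\bot$ and $yI^\bot = 0$, so $y^2 = 0$ and hence $y = 0$. Second, a finitely generated ideal contained in a sum $\sum_\alpha J_\alpha$ actually lies in a finite subsum, since each of its generators does.

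For the implication that Goldie implies zip, only the ACC on annihilators is needed. Given any ideal $I$ with $I^\bot = 0$, I would consider the family $\mathcal{F} = \{J^\bot : J \subseteq I,\ J \text{ finitely generated}\}$. By ACC, $\mathcal{F}$ has a maximal element $J_0^\bot$; for every $a \in I$ the ideal $J_0 + Ra$ is finitely generated and lies in $I$, so $(J_0 + Ra)^\bot \subseteq J_0^\bot$, and maximality forces equality. Hence $J_0^\bot \subseteq (Ra)^\bot$ for all $a \in I$, and so $J_0^\bot \subseteq I^\bot = 0$, providing the finitely generated dense sub-ideal required by the zip property.

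For the converse I would derive finite Goldie dimension and ACC on annihilators from zip by the same finiteness trick. If $I = \bigoplus_{i=1}^\infty E_i$ were an infinite direct sum of nonzero ideals, applying zip to the dense ideal $I + I^\bot$ would yield a finitely generated dense ideal $J \subseteq E_1 + \cdots + E_n + I^\bot$ for some $n$; but $E_{n+1} E_i \subseteq E_{n+1} \cap E_i = 0$ for $i \leq n$ and $E_{n+1} I^\bot \subseteq I \cdot I^\bot = 0$, giving $E_{n+1} \subseteq J^\bot = 0$, a contradiction. For ACC, given an ascending chain $A_1 \subseteq A_2 \subseteq \cdots$ of annihilator ideals, I would set $K = (\bigcup_i A_i)^\bot$ and apply zip to the dense ideal $(\bigcup_i A_i) + K$ to produce a finitely generated dense $J \subseteq A_n + K$; then $(A_n + K)^\bot = 0$, that is, $A_n^\bot \cap K^\bot = 0$. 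Since $A_m \subseteq \bigcup_i A_i \subseteq K^\bot$ for all $m$, this forces $A_n^\bot \cap A_m = 0$ for $m \geq n$, hence $A_n^\bot \cdot A_m = 0$ by commutativity, so $A_n^\bot \subseteq A_m^\bot$; combined with the reverse inclusion (coming from $A_n \subseteq A_m$) and taking double annihilators, this yields $A_m = A_n$. The main obstacle is organizing this ACC argument so that the required equality of annihilators emerges cleanly; the rest is bookkeeping around the identity $(I + I^\bot)^\bot = 0$ and the finiteness of generating sets.
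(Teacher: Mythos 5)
The paper offers no proof of this statement at all --- it is quoted verbatim from Faith's paper \cite{4Faith} --- so there is nothing internal to compare against; I can only judge your argument on its own terms. Your zip $\Rightarrow$ Goldie direction is correct and complete: the observation that $I+I^\bot$ is dense in a semiprime ring, the finite-subsum trick to kill an infinite direct sum $\bigoplus E_i$ via $E_{n+1}J=0$, and the stabilization argument $A_n^\bot\cap K^\bot=0 \Rightarrow A_n^\bot=A_m^\bot \Rightarrow A_n=A_m$ (using that the $A_i$ are annihilator ideals, hence closed under double annihilator) all check out. This is more elementary and self-contained than Faith's own route, which passes through Goldie's theorem (the semiprime Goldie hypothesis gives a semisimple artinian $Q_{cl}(R)$, and dense ideals then contain regular elements).

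The Goldie $\Rightarrow$ zip direction, however, contains a genuine logical error. You take a \emph{maximal} element $J_0^\bot$ of $\mathcal{F}=\{J^\bot: J\subseteq I,\ J \text{ f.g.}\}$ and then argue that $(J_0+Ra)^\bot\subseteq J_0^\bot$ ``and maximality forces equality.'' It does not: maximality of $J_0^\bot$ only rules out members of $\mathcal{F}$ properly \emph{containing} $J_0^\bot$, and here the new annihilator is \emph{smaller}. Worse, the maximal element of $\mathcal{F}$ is just $R$ itself (take $J=0$), so it carries no information. What your argument needs is a \emph{minimal} element of $\mathcal{F}$, and the ascending chain condition does not hand you one directly. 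The standard repair is to note that on the set of annihilator ideals the map $X\mapsto X^\bot$ is an order-reversing involution (since $X^{\bot\bot\bot}=X^\bot$), so ACC on annihilators is equivalent to DCC on annihilators; equivalently, apply ACC to the family $\{J^{\bot\bot}: J\subseteq I,\ J \text{ f.g.}\}$ to get a maximal $J_0^{\bot\bot}$, whence $J_0^\bot$ is minimal in $\mathcal{F}$ and your computation $J_0^\bot\subseteq (Ra)^\bot$ for all $a\in I$, hence $J_0^\bot\subseteq I^\bot=0$, goes through. With that one correction the proof is sound (and in fact shows that ACC on annihilators alone implies zip, without semiprimeness).
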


\begin{proposition} \cite{4Faith}
    A commutative Kasch ring is zip.
\end{proposition}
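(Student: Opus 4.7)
The plan is to unpack the two definitions and observe that the Kasch hypothesis, read contrapositively, says that any \emph{dense} ideal (one with zero annihilator) must be the whole ring. The zip condition then follows almost vacuously.

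Concretely, I would start by letting $I$ be any ideal of $R$ with $I^\bot=\{0\}$; the goal is to find a finitely generated $I_0\subseteq I$ with $I_0^\bot=\{0\}$. The Kasch assumption states that every proper ideal has nonzero annihilator, which is logically equivalent to: if $I^\bot=\{0\}$ then $I=R$. So from $I^\bot=\{0\}$ I immediately conclude $I=R$.

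Next I would exhibit the witness explicitly: take $I_0=(1)=R$. This is finitely generated (indeed principal), it is contained in $I=R$, and its annihilator is $R^\bot=\{x\in R\mid 1\cdot x=0\}=\{0\}$. Thus $I_0$ meets the requirements of the zip definition and $R$ is zip.

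There is no serious obstacle; the entire argument is a reformulation of the Kasch property. The one thing worth flagging, if the reader is new to these notions, is the use of the identity $1\in R$: it is exactly what makes the unit ideal finitely generated and forces $R^\bot=\{0\}$ in a ring with identity, which is why the convention fixed in the introduction (all rings commutative with identity) is essential for this short argument.
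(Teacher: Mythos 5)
Your proof is correct. The paper itself gives no proof of this proposition --- it is simply quoted from Faith's article --- so there is nothing to compare against, but your argument is the standard one-line deduction: the Kasch condition, read contrapositively, forces every dense ideal to be the unit ideal, and the unit ideal is itself a finitely generated dense subideal of itself. The only point worth a remark is notational: the paper's definition of zip writes ``a finitely generated ideal $I_0\subset I$,'' and your argument requires reading $\subset$ as $\subseteq$ (so that $I_0=I=R$ is allowed). That is the intended and standard reading --- under a strict-containment reading the proposition would actually be false, since in a Kasch ring no proper ideal is dense --- so your use of $I_0=(1)$ is exactly right, as is your observation that the existence of $1$ is what makes the unit ideal finitely generated with zero annihilator.
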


\begin{proposition} \cite{4Faith}
    If $Q_{cl}(R)$ is a Kasch ring then $R$ is zip.
\end{proposition}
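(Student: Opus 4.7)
The plan is to contrapose the zip property through the Kasch hypothesis on $Q_{cl}(R)$. Suppose $I$ is an ideal of $R$ with $I^\bot=\{0\}$; I want to produce a finitely generated subideal $I_0\subseteq I$ with $I_0^\bot=\{0\}$. My first step is to pass to $Q_{cl}(R)$ and show that the extended ideal $IQ_{cl}(R)$ must be the whole ring. Assume to the contrary it is proper. Since $Q_{cl}(R)$ is Kasch, there exists $q\in Q_{cl}(R)\setminus\{0\}$ with $q\cdot IQ_{cl}(R)=0$, hence $qI=0$. Writing $q=a/s$ with $s$ regular and $a\in R$, the relation $ai/s=0$ in $Q_{cl}(R)$ combined with regularity of $s$ forces $ai=0$ for all $i\in I$; moreover $a\ne 0$ because $q\ne 0$. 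Thus $a\in I^\bot\setminus\{0\}$, contradicting the hypothesis $I^\bot=\{0\}$.

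Once $IQ_{cl}(R)=Q_{cl}(R)$ is established, I write $1=\sum_{k=1}^{n}i_k q_k$ with $i_k\in I$ and $q_k\in Q_{cl}(R)$, and clear denominators: choosing a common regular denominator $s$ so that $q_k=r_k/s$ with $r_k\in R$, I obtain $\sum_{k=1}^{n}i_k r_k=s$ in $R$. Hence the finitely generated ideal $I_0=(i_1,\dots,i_n)\subseteq I$ contains the regular element $s$.

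The last step is immediate: for any $x\in I_0^\bot$ we have $xs=0$, and since $s$ is regular this gives $x=0$. Therefore $I_0^\bot=\{0\}$, exhibiting the finitely generated witness required by the definition of a zip ring.

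The main obstacle is the first step, namely the careful translation of ``dense in $R$'' into ``equal to the whole of $Q_{cl}(R)$ after extension.'' Everything else is bookkeeping with common denominators and regular elements, but the Kasch-to-annihilator passage is where the hypothesis on $Q_{cl}(R)$ is actually used and where one must be attentive to the fact that an element of $Q_{cl}(R)$ annihilating $I$ need not itself lie in $R$ — this is precisely why the argument multiplies by a regular denominator to pull the annihilator back into $R$.
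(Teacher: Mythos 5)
Your argument is correct. The paper itself gives no proof of this proposition --- it is quoted from Faith's article --- and in that context the intended derivation is simply to combine the two neighbouring results: a commutative Kasch ring is zip, and $R$ is zip if and only if $Q_{cl}(R)$ is zip; applying the first to $Q_{cl}(R)$ and then the second transfers the property back to $R$. Your route is a direct, self-contained verification instead, and every step checks out: the Kasch hypothesis applied to the extended ideal $IQ_{cl}(R)$ produces an annihilating fraction $a/s$, and since $s$ is a unit in $Q_{cl}(R)$ and $R$ embeds in $Q_{cl}(R)$ (the denominators are non-zero-divisors), the numerator $a$ is a nonzero element of $I^\bot$, contradicting density; clearing denominators in $1=\sum i_k q_k$ then yields a regular element $s=\sum i_k r_k$ inside the finitely generated subideal $I_0$, whose annihilator is therefore zero. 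What your approach buys beyond the citation is a slightly stronger and more useful conclusion: under the Kasch hypothesis on $Q_{cl}(R)$, every dense ideal of $R$ contains a finitely generated subideal containing a \emph{regular element}, which is exactly the phenomenon the paper's later Proposition on Bezout rings (``$R$ is zip if and only if every dense ideal contains a regular element'') is trying to capture, and which that proposition's one-line proof leaves essentially unexplained. The only cosmetic caution is to say explicitly that a common denominator for finitely many fractions can be taken to be the product of the individual regular denominators, which is again regular because the non-zero-divisors form a multiplicative set.
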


For further research we will need the following results.

\begin{theorem} \cite{4Faith}
    A commutative ring $R$ is zip if and only if its classical ring of quotients $Q_{cl}(R)$  is zip.
\end{theorem}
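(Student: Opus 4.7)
The strategy is to exploit the natural correspondence between ideals of $R$ and ideals of $Q := Q_{cl}(R)$ given by extension $I \mapsto IQ$ and contraction $J \mapsto J \cap R$. The zip condition is a statement about dense ideals, so the preparatory step is to check that this correspondence preserves denseness: for any ideal $I$ of $R$ one has $I^\bot = 0$ in $R$ if and only if $(IQ)^\bot = 0$ in $Q$, and symmetrically for $J \leq Q$ one has $J^\bot = 0$ if and only if $(J\cap R)^\bot = 0$. Both equivalences follow from two facts: the localization map $R \to Q$ is injective (so if $a \in R$ satisfies $ai = 0$ in $Q$ then $ai = 0$ in $R$), and every element of $Q$ has the form $a/s$ with $s$ regular in $R$ and hence a unit in $Q$.

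Assume first that $R$ is zip and let $J$ be an ideal of $Q$ with $J^\bot = 0$. Then $I := J \cap R$ is dense in $R$, so by hypothesis there is a finitely generated subideal $I_0 = (c_1,\dots,c_n)R \subseteq I$ with $I_0^\bot = 0$. Set $J_0 := I_0 Q \subseteq J$; this is finitely generated over $Q$. If $q = a/t \in Q$ annihilates $J_0$, then $a c_k = 0$ in $Q$, hence in $R$, for every $k$, so $a \in I_0^\bot = 0$ and $q = 0$. Thus $Q$ is zip. Conversely, assume $Q$ is zip and let $I$ be a dense ideal of $R$. Then $IQ$ is dense in $Q$, so there is a finitely generated $J_0 = (j_1,\dots,j_n)Q \subseteq IQ$ with $J_0^\bot = 0$. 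The technical heart of the argument is to pull this back: each generator $j_k$ can be written as $c_k/t_k$ with $c_k \in I$ and $t_k$ regular in $R$, since any element of $IQ$ is a finite $R$-linear combination of quotients of elements of $I$, and clearing to a common denominator leaves the numerator inside $I$ (this uses crucially that $I$ is an ideal, closed under multiplication by $R$). Put $I_0 := (c_1,\dots,c_n)R \subseteq I$. Any $r \in R$ with $rI_0 = 0$ satisfies $r(c_k/t_k) = 0$ in $Q$, hence $rJ_0 = 0$, forcing $r = 0$. So $I_0^\bot = 0$ and $R$ is zip.

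The only genuinely delicate point is the denominator-clearing step in the converse direction: one must be careful that after putting the numerators of the $j_k$'s over a common denominator, the resulting numerator still lies inside the original ideal $I$. Everything else is a routine application of the density-preservation lemma together with the definition of a zip ring.
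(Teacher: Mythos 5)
Your proof is correct. Note that the paper itself offers no argument for this statement: it is quoted from Faith's paper \cite{4Faith} as a known result, so there is no ``paper proof'' to compare against; what you have written is a self-contained justification of the cited fact. The two pillars of your argument both hold up. The density-preservation lemma is sound because $R\to Q_{cl}(R)$ is injective and every element of $Q_{cl}(R)$ is $a/t$ with $t$ regular, hence a unit in $Q_{cl}(R)$; in particular every element of an ideal $J$ of $Q_{cl}(R)$ has the form $a/t$ with $a\in J\cap R$, which is what makes the equivalence $J^\bot=0 \Leftrightarrow (J\cap R)^\bot=0$ work in both directions. The denominator-clearing step you flag as delicate is indeed the only place where something could go wrong, and your treatment is right: an element of $IQ_{cl}(R)$ is a finite sum $\sum i_m(a_m/t_m)$, and after passing to the common denominator $t=\prod t_m$ the numerator $\sum i_m a_m (t/t_m)$ still lies in $I$ because each summand is $i_m$ times a ring element. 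With $c_k\in I$ and $t_k$ a unit in $Q_{cl}(R)$, the ideals $(c_1,\dots,c_n)R$ and $(j_1,\dots,j_n)Q_{cl}(R)$ have the same annihilator intersected with $R$, which closes the converse direction. This extension--contraction argument is the standard route (essentially Faith's), and it is complete as written.
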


\begin{theorem} \label{theor-1.3new}
    Let $R$ be a commutative Bezout domain and $0\ne a\in R$, then $R/aR$ is a Kasch ring if and only if $R$ is a ring in which any maximal ideal is principal.
\end{theorem}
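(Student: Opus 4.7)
The plan is to convert the Kasch condition on $R/aR$ into statements about annihilator (colon) ideals in $R$, then exploit the GCD structure of a Bezout domain to make those colon ideals principal. Throughout I use the standard fact that a commutative ring is Kasch iff every maximal ideal has nonzero annihilator (by the order-reversing property of $\bot$ together with the fact that every proper ideal lies in a maximal one).

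For the direction $(\Leftarrow)$, assume every maximal ideal of $R$ is principal and fix $0\ne a\in R$. The maximal ideals of $R/aR$ are exactly those of the form $M/aR$ with $M$ a maximal ideal of $R$ containing $aR$; by hypothesis $M=pR$ with $p\mid a$. Writing $a=pc$, I would compute directly in the domain $R$ that $(aR:p)=cR$, so the annihilator of $pR/aR$ in $R/aR$ equals $cR/aR$, which is nonzero because $p$ is not a unit. Hence every maximal ideal of $R/aR$ has nonzero annihilator, so $R/aR$ is Kasch.

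For the direction $(\Rightarrow)$, I would assume that $R/aR$ is Kasch for every nonzero $a$ and let $M$ be an arbitrary maximal ideal of $R$. Choosing $0\ne a\in M$, the ideal $M/aR$ is maximal in $R/aR$, so by hypothesis there exists $x\in R\setminus aR$ with $Mx\subseteq aR$. This gives $M\subseteq(aR:x)$; since $x\notin aR$ the ideal $(aR:x)$ is proper, so maximality of $M$ forces $M=(aR:x)$. The Bezout hypothesis is then used to identify this colon ideal: setting $d=\gcd(a,x)$ and writing $a=da'$, $x=dx'$ with $\gcd(a',x')=1$, one checks $rx\in aR\iff a'\mid rx'\iff a'\mid r$, so $(aR:x)=a'R=(a/d)R$. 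Thus $M$ is principal.

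The main obstacle—and really the only nonformal step—is the colon ideal identity $(aR:x)=(a/\gcd(a,x))R$, which is where the Bezout (GCD) hypothesis on $R$ is essential. Once it is in place, both implications reduce to unwinding the definitions of Kasch and of the annihilator in the quotient.
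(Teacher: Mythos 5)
Your proof is correct and follows essentially the same route as the paper's: both reduce the Kasch condition to the statement that every maximal ideal of $R/aR$ has nonzero annihilator, and both hinge on the computation of the colon ideal $(aR:x)$ as the principal ideal generated by $a/\gcd(a,x)$, which is exactly the paper's opening claim that annihilators of principal ideals in $R/aR$ are principal. Your write-up is in fact a cleaner rendering of the same argument (working with colon ideals in $R$ rather than double annihilators in $R/aR$), and your explicit quantification of the Kasch hypothesis over all nonzero $a$ in the forward direction is the correct reading of the somewhat loosely stated theorem.
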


\begin{proof}
    First we will prove that the annihilator of any principal ideal of $R/aR$ is a principal ideal.

    Suppose $b\in R$ and $aR\subseteq bR$. Then $(b\colon a)=\{r\in R\mid br\in aR\}=sR$, where $a=bs$, so $(b\colon a)=aR$. We can also show that every principal ideal of $R/aR$ is a annihilator of a principal ideal. Moreover, if $I_1=\mathrm{Ann}(J_1)$, $I_2=\mathrm{Ann}(J_2)$, where $I_i$, $J_i$, $i=1,2$, are principal ideals, then
    \begin{multline*}
    \mathrm{Ann}(I_1\cap I_2)=\mathrm{Ann}(\mathrm{Ann}(J_1)\cap \mathrm{Ann}(J_2))=\\=\mathrm{Ann}(\mathrm{Ann}(J_1+J_2))=J_1+J_2=\mathrm{Ann}(J_1)+\mathrm{Ann}(J_2).
    \end{multline*}

    Let $R/aR$ be a Kasch ring. Let $\overline{M}$ be a maximal ideal in $R/aR$. Denote $R/aR=\overline{R}$. Then $\mathrm{Ann}(\overline{M})=\overline{H}$, where $\overline{H}$ is an ideal in $\overline{R}=R/aR$ and $\overline{H}=\{\overline{0}\}$. Since $\overline{H}$ annihilates the maximal ideal $\overline{M}$ then $\overline{H}\cdot \overline{M}=\{\overline{0}\}$. Since the maximal ideal $\overline{M}$ belongs to $\mathrm{Ann}(\overline{H})$, than by maximality of $\overline{M}$, $\overline{M}=\mathrm{Ann}(\overline{H})\ne R/aR$.

    Since $\overline{M}$ is a maximal ideal, then for every element $\overline{d}\ne\overline{0}$, which belongs to $\overline{H}$. We have the equality $\overline{d}\overline{M}=\{\overline{0}\}$. Thus, the maximal ideal $\overline{M}$ belongs to $\mathrm{Ann}(\overline{d})$, where $\overline{d}$ is a nonunit.

    Hence $\overline{M}=\mathrm{Ann}(\overline{d})=\overline{b}\overline{R}$. Therefore, $\overline{M}=\overline{b}\overline{R}$ and $M=bR+aR=cR$, because $R$ is a commutative Bezout domain for some $c\in R$. Hence $M$ is a maximal ideal which is a principal ideal.

    Suppose that a maximal $M$ contains an element $a$, is a principal one considering its homomorphic image we have $\overline{M}=\overline{m}\overline{R}=\mathrm{Ann}(\overline{n}\overline{R})$. Since $\overline{m}\notin U(\overline{R})$ then we have $\mathrm{Ann} (\overline{n}\overline{R})\ne \overline{R}$ and hence $\overline{n}\overline{R}\ne\{\overline{0}\}$.

    As a result $\mathrm{Ann}(\overline{M})=\mathrm{Ann}(\mathrm{Ann}(\overline{n}\overline{R})) =\overline{n}\overline{R}\ne(\overline{0})$. Therefore, $\mathrm{Ann}(\overline{M})$ is a nonzero principal ideal. This proves the fact that $\overline{R}$ is a Kasch ring.
\end{proof}

\section{Our results}

Note that

\begin{proposition}
    Let $R$ be a Bezout  ring. Then $R$ is zip of and only if every dense ideal contains a regular element.
\end{proposition}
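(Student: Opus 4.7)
The statement is essentially a direct consequence of the definition of a zip ring, combined with the Bezout hypothesis to convert ``finitely generated'' into ``principal''. I would split the proof into the two obvious implications.

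For the easy direction ($\Leftarrow$), assume every dense ideal contains a regular element, and let $I$ be any ideal with $I^\bot=\{0\}$. By assumption, there is a regular element $a\in I$. Then the principal (hence finitely generated) ideal $I_0=aR\subseteq I$ satisfies $I_0^\bot=\mathrm{Ann}(a)=\{0\}$ because $a$ is regular. This is exactly the zip condition, and notice that this implication does not actually require the Bezout hypothesis.

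For the forward direction ($\Rightarrow$), assume $R$ is Bezout and zip, and let $I$ be a dense ideal, i.e.\ $I^\bot=\{0\}$. By the zip property there exists a finitely generated ideal $I_0\subseteq I$ with $I_0^\bot=\{0\}$. Here I would invoke the Bezout hypothesis: every finitely generated ideal is principal, so $I_0=aR$ for some $a\in R$. Then $\{0\}=I_0^\bot=(aR)^\bot=\mathrm{Ann}(a)$, which says precisely that $a$ is a regular element of $R$. Since $a\in I_0\subseteq I$, the dense ideal $I$ contains a regular element, as required.

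There is no real obstacle here; the entire argument is a two-line unwinding of the definitions, with the Bezout hypothesis doing the single job of replacing the witness finitely generated subideal $I_0$ by a principal one $aR$, whose generator then serves as the sought regular element. I would present the proof in exactly this order, emphasizing that the nontrivial content of the proposition is the role played by the Bezout condition in the forward implication.
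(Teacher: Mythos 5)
Your proof is correct and follows essentially the same idea as the paper: in the forward direction the Bezout hypothesis turns the finitely generated dense subideal guaranteed by the zip property into a principal one $aR$, whose generator is then regular. In fact your write-up is more complete than the paper's, which only sketches (rather tersely) the forward implication and omits the easy converse that you spell out.
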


\begin{proof}
   If $I$ is a dense ideal of a zip ring, and if $I$ is principal dense ideal contained in $I$, hence $I$ is generated by a regular element.
\end{proof}

\begin{theorem} \label{theor-6}
    Let $R$ be a semiprime commutative Bezout ring which is a Goldie ring. Then any minimal prime ideal of $R$ is principal, generated by  an idempotent, and there are only finitely many minimal prime ideals.
\end{theorem}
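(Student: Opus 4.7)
\textit{Proof strategy.} The plan is to use Goldie's theorem to control the classical quotient ring $Q:=Q_{cl}(R)$ and then transfer that structure back to $R$ via the Bezout hypothesis. Since $R$ is a commutative semiprime Goldie ring, Goldie's theorem tells us $Q$ is semisimple Artinian, and being commutative it decomposes as $Q\cong K_1\times\cdots\times K_n$ for fields $K_i$. The contraction map $\mathfrak{q}\mapsto\mathfrak{q}\cap R$ gives a bijection between the primes of $Q$ (i.e.\ the $n$ kernels $M_i$ of the coordinate projections) and the primes of $R$ that consist entirely of zero-divisors; the latter family contains every minimal prime of $R$. This already yields that $R$ has exactly $n$ minimal primes.

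Fix a minimal prime $P$ of $R$ with corresponding $M\subset Q$, so that $M=fQ$ where $f\in Q$ is the idempotent complementary to the primitive $e_i$ (that is, $f=1-e_i$). Write $f=a/s$ with $a\in R$ and $s$ a regular element of $R$. The Bezout hypothesis gives $aR+sR=dR$ for some $d\in R$; since $s=ds'$ is regular, so is $d$, and after dividing through by $d$ we may assume $aR+sR=R$, so that $au+sv=1$ for some $u,v\in R$. The relation $f^2=f$ then reads $a^2=as$ in $R$.

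Now set $\epsilon:=a(u+v)\in R$. In $Q$, using $a^2=as$,
\[
f \;=\; f(au+sv) \;=\; \frac{a^2}{s}\,u + av \;=\; au + av \;=\; \epsilon.
\]
Since $R$ embeds in $Q$ and $f$ is idempotent, $\epsilon^2=\epsilon$ must hold in $R$: we have lifted $f$ to a genuine idempotent of $R$. To conclude, $\epsilon\in M\cap R=P$ gives $\epsilon R\subseteq P$; conversely, for any $r\in P\subseteq M$ the identity $r=fr=\epsilon r$ holds in $Q$ and hence in $R$, so $r\in\epsilon R$. Therefore $P=\epsilon R$ is principal, generated by the idempotent $\epsilon$.

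The main obstacle is the idempotent-lifting step in the middle paragraph: in a general commutative semiprime Goldie ring the decomposition of $Q$ into a product of fields is automatic, but there is no reason for its orthogonal idempotents to sit inside $R$. The Bezout hypothesis is exactly what allows us to clear denominators in $f=a/s$ down to a coprime representation $au+sv=1$, from which the closed-form representative $\epsilon=a(u+v)$ for $f$ falls out; verifying $\epsilon^2=\epsilon$ then reduces to the injectivity of the localization $R\hookrightarrow Q$, which is automatic as $Q$ is the localization at regular elements.
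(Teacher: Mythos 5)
Your proof is correct, and its skeleton coincides with the paper's: both pass to the classical quotient ring $Q_{cl}(R)$, use Goldie's theorem to see that it is a finite product of fields (the paper phrases this as ``an Artinian regular ring with finitely many minimal prime ideals''), identify the extension of a minimal prime $P$ with an ideal of $Q_{cl}(R)$ generated by an idempotent, and then argue that this idempotent already lies in $R$ and generates $P$ there. The difference is in how the two delicate points are discharged. For the idempotent-lifting step the paper simply asserts that $e\in R$ because a Bezout ring is arithmetical, outsourcing the claim to a citation of Zabavsky's paper on fractionally regular Bezout rings; you instead give a short self-contained computation, clearing the denominator of $f=a/s$ to a coprime presentation $au+sv=1$ and exhibiting the explicit lift $\epsilon=a(u+v)$, with $f^2=f$ translating into $a^2=as$ --- this is where your use of the Bezout hypothesis is localized, and the verification is sound. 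For the finiteness of the set of minimal primes the paper invokes Anderson's theorem that a commutative ring whose minimal primes are all finitely generated has only finitely many of them, whereas you read finiteness off directly from the correspondence between minimal primes of $R$ and the $n$ maximal ideals of $Q_{cl}(R)\cong K_1\times\cdots\times K_n$. Your version is therefore more elementary and self-contained than the paper's. The only point worth polishing is the claim of ``exactly $n$'' minimal primes: the bijection immediately gives at most $n$, and to get equality you should note that each contraction $M_i\cap R$ is itself a minimal prime because the localization correspondence preserves inclusions in both directions; for the theorem as stated, ``at most $n$'' already suffices.
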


\begin{proof}
   The restrictions on $R$ imply that the classical quotient ring $Q_{cl}(R)$ is an Artinian regular ring with finitely many minimal prime ideals. Let $P$ be a minimal prime ideal of $R$. Consider the ideal
   $P_Q=\{\frac{p}{s}\mid p\in P\}$. It is obvious that $P_Q$ is a prime ideal of $Q_{cl}(R)$. Since $Q_{cl}(R)$ is an Artinian regular ring, there exists an idempotent $e\in Q_{cl}(R)$ such that $P_Q=eQ_{cl}(R)$. Since $R$ is arithemical ring, then we have $e\in R$ \cite{3Zabavsky}. For any $p\in P$ we obtain that $p=er$, where $r$ is a von Neumann regular element, i.e. $rxr=r$ for some $x\in R$. Hence $ep=e^2r=er=p$, we have $P\subset eR$, $e\in P$, so $eR\subset P$ and $P=eR$. Since any minimal prime ideal of $R$ is principal by \cite{5Anderson}, we have that $R$ have finitely many minimal prime ideals.
\end{proof}

\begin{definition}
    Let $R$ be  a commutative Bezout domain. Nonzero and nonunit element $a\in R$ is said to be almost zip element if $R/\mathrm{rad}(aR)$  is a zip ring. Commutative Bezout domain is said to be almost zip ring if any nonzero nonunit element of $R$ is almost zip element.
\end{definition}

\begin{theorem}\label{theor-7}
    Let $R$ be a commutative Bezout domain and a almost zip element of $R$. Then there are only finitely many prime ideals minimal over $aR$.
\end{theorem}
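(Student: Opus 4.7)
The plan is to reduce the statement to the previously established Theorem \ref{theor-6} by passing to the semiprime quotient $R/\mathrm{rad}(aR)$. First, I would observe that $\mathrm{rad}(aR)$ is by definition the intersection of all prime ideals containing $aR$, so the quotient ring $\overline{R}:=R/\mathrm{rad}(aR)$ has zero nilradical, i.e. it is semiprime. Moreover, since any homomorphic image of a Bezout ring is again a Bezout ring, $\overline{R}$ is a commutative Bezout ring.

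Next, the almost zip hypothesis on $a$ tells us precisely that $\overline{R}$ is a zip ring. Combining this with the semiprimeness of $\overline{R}$, Theorem \ref{theor-1} of Faith gives that $\overline{R}$ is a commutative Goldie ring. Thus $\overline{R}$ satisfies all the hypotheses of Theorem \ref{theor-6}: it is a semiprime commutative Bezout Goldie ring.

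Applying Theorem \ref{theor-6} to $\overline{R}$, one concludes that $\overline{R}$ has only finitely many minimal prime ideals. Finally, I would invoke the standard correspondence between prime ideals of $\overline{R}$ and prime ideals of $R$ containing $\mathrm{rad}(aR)$: the minimal primes of $\overline{R}$ are exactly the images of the prime ideals of $R$ that are minimal over $\mathrm{rad}(aR)$, and these in turn coincide with the prime ideals of $R$ minimal over $aR$ (a prime contains $aR$ if and only if it contains $\mathrm{rad}(aR)$, so the minimal ones agree). Therefore there are only finitely many prime ideals minimal over $aR$, as required.

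The only nontrivial step is the appeal to Theorem \ref{theor-6}, which is already proved; everything else — that quotients of Bezout rings are Bezout, that $R/\mathrm{rad}(aR)$ is semiprime, and the bijective correspondence between minimal primes over $aR$ and minimal primes of $\overline{R}$ — is routine, so no real obstacle is expected.
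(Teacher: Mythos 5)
Your proposal is correct and follows essentially the same route as the paper: pass to the semiprime Bezout quotient $R/\mathrm{rad}(aR)$, use the zip hypothesis together with Theorem~\ref{theor-1} to get a Goldie ring, apply Theorem~\ref{theor-6} to get finitely many minimal primes, and pull back to primes minimal over $aR$. Your write-up is in fact slightly more careful than the paper's, since you make explicit the correspondence between minimal primes of the quotient and primes of $R$ minimal over $aR$, which the paper dismisses as obvious.
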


\begin{proof}
   Since $R/\mathrm{rad} (aR)$ is semiprime zip ring then by Theorem~\ref{theor-1} we have that $R/\mathrm{rad}(aR)$ is a Goldie Bezout ring. By Theorem~\ref{theor-6} we have that any minimal prime ideal of $R/\mathrm{rad} (aR)$ is principal and is generated by an idempotent. Then there are only finitely many minimal prime ideals. Obvious then $aR$ has finitely many minimal prime ideals.
\end{proof}

Consequently we have the following  results.

\begin{theorem}\label{theor-8}
    Almost zip commutative Bezout domain is $J$-Noetherian domain (i.e. Noetherian maximal spectrum).
\end{theorem}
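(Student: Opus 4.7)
The plan is to reduce the claim to ACC on $J$-radical ideals in a quotient of the form $\bar R := R/\mathrm{rad}(aR)$ for a suitable nonzero $a$, and then to invoke the structural consequences of almost-zipness proved in Theorems~\ref{theor-1},~\ref{theor-6} and~\ref{theor-7}. Given an ascending chain $I_1 \subseteq I_2 \subseteq \cdots$ of $J$-radical ideals of $R$, we dispose of the case in which all $I_k$ are zero; otherwise, after shifting indices, pick a nonzero (hence nonunit) element $a \in I_1$. Since each $I_k$ is $J$-radical and therefore radical, and contains $a$, it contains $\mathrm{rad}(aR)$, so the induced chain $\bar I_k := I_k/\mathrm{rad}(aR)$ is a chain of $J$-radical ideals in $\bar R$ whose strict ascents correspond exactly to those of the original chain. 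Thus it suffices to prove ACC on $J$-radical ideals of $\bar R$.

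Since $a$ is almost zip, $\bar R$ is a semiprime commutative zip Bezout ring, hence Goldie by Theorem~\ref{theor-1}; by Theorem~\ref{theor-6} it has only finitely many minimal primes $\bar P_1, \ldots, \bar P_n$, each generated by an idempotent $\bar e_i$. Working with the complementary idempotents $1-\bar e_i$ and induction on $n$, one decomposes $\bar R$ as a finite direct product $\bar R \cong \prod_{i=1}^{n} \bar R/\bar P_i$ of Bezout domains. Under this decomposition the lattice of $J$-radical ideals of $\bar R$ is the product of the corresponding lattices of the factors, so ACC on $J$-radical ideals of $\bar R$ is equivalent to ACC in each factor $\bar R_i := \bar R/\bar P_i$, which is of the form $R/Q_i$ for a prime $Q_i$ of $R$ minimal over $aR$.

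Finally, to establish ACC on $J$-radical ideals in each factor $R/Q_i$, I would argue that the almost-zip property descends along the quotient $R \to R/Q_i$: for any nonzero $\bar c \in R/Q_i$ and a lift $c \in R$ one has $(R/Q_i)/\mathrm{rad}(\bar c(R/Q_i)) \cong R/\mathrm{rad}(cR + Q_i)$, and one would verify that this quotient is zip by combining the zipness of $R/\mathrm{rad}(cR)$ (given by the almost-zip hypothesis on $R$) with the fact that $Q_i$ is a minimal prime of a principal ideal. The recursive step then applies the whole argument to the Bezout domain $R/Q_i$; since $Q_i$ is a height-one prime in the Prufer domain $R$, the Krull dimension drops strictly, and an induction on dimension (base case: $R$ is a field) terminates the argument and forces the original chain to stabilize.

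The main obstacle I anticipate is precisely this last step: checking that the almost-zip property descends to $R/Q_i$ when $Q_i$ is a minimal prime of a principal ideal. The delicate point is that $\mathrm{rad}(cR + Q_i)$ need not itself be the radical of a principal ideal in $R$, so the almost-zip hypothesis cannot be applied to $cR + Q_i$ directly; one must instead use the decomposition machinery (finiteness of minimal primes via Theorem~\ref{theor-7} combined with the idempotent structure from Theorem~\ref{theor-6}) to see that $R/\mathrm{rad}(cR + Q_i)$ still inherits the semiprime Goldie Bezout structure needed to conclude via Theorem~\ref{theor-1} that it is zip.
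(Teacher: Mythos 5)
Your strategy is genuinely different from the paper's, which is a two-line affair: Theorem~\ref{theor-7} gives that every nonzero nonunit has only finitely many primes minimal over the principal ideal it generates, and a cited result of Estes and Ohm \cite{8EstesOhm} then yields that $R$ is $J$-Noetherian. Your direct attack on the ascending chain condition for $J$-radical ideals starts soundly --- the reduction to $\bar R=R/\mathrm{rad}(aR)$ and the decomposition of $\bar R$ into finitely many Bezout domains $R/Q_i$ via the comaximal idempotent-generated minimal primes of Theorem~\ref{theor-6} are both correct --- but it has two genuine gaps that prevent it from closing.

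First, the step you yourself flag is never actually carried out: you assert that ``one would verify'' that $R/\mathrm{rad}(cR+Q_i)$ is zip, but since $cR+Q_i$ need not be principal and its radical need not be the radical of a principal ideal, the almost zip hypothesis on $R$ says nothing directly about this quotient, and no substitute argument is supplied. Second, and more fatally, the induction is not well-founded. A Bezout domain need not have finite Krull dimension; for instance, a valuation domain of infinite Krull dimension is almost zip (there $\mathrm{rad}(aR)$ is a prime, so $R/\mathrm{rad}(aR)$ is a valuation domain, hence a Goldie domain, hence zip by Theorem~\ref{theor-1}), yet your ``induction on dimension with base case a field'' cannot terminate on it. The auxiliary claim that $Q_i$ has height one is also false in general: in a two-dimensional valuation domain with value group $\mathbb{Z}\oplus\mathbb{Z}$ ordered lexicographically, the maximal ideal is itself minimal over a suitable principal ideal. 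To repair the argument you would need either the Estes--Ohm criterion the paper invokes (finitely many minimal primes over each principal ideal implies Noetherian maximal spectrum) or some other device that bounds the depth of your recursion without presupposing finite dimension.
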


\begin{proof}
   By Theorem \ref{theor-8} we have that any nonzero and nonunit element has finitely many minimal ideals. By \cite{8EstesOhm} $R$ is a $J$-Noetherian domain.
\end{proof}

Since a commutative $J$-Noetherian Bezout domain \cite{7ShW} is an elementary divisor ring by Theorem~\ref{theor-8}, we have the following  results.

\begin{theorem}\label{theor-9}
    A commutative almost zip Bezout domain is an elementary divisor domain.
\end{theorem}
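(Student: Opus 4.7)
The plan is to chain together results already established in the paper so that the final statement becomes a one-line assembly. Let $R$ be a commutative almost zip Bezout domain. The first step is to invoke Theorem~\ref{theor-8}, which says that every such $R$ is $J$-Noetherian (equivalently, has Noetherian maximal spectrum). This reduces the theorem to a purely structural statement about $J$-Noetherian Bezout domains.

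The second step is to appeal to the cited result of Shores--Wiegand (\cite{7ShW}), which asserts that every commutative $J$-Noetherian Bezout domain is an elementary divisor ring. Combining this with the previous step immediately yields that $R$ admits diagonal reduction for every matrix, which is exactly the definition of an elementary divisor domain recalled in the introduction. No further computation is required, because the work has all been front-loaded into Theorems~\ref{theor-7} and~\ref{theor-8}.

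The only genuine issue, and the part I would scrutinise most carefully, is the validity and exact form of the citation. The definition of elementary divisor ring in the paper requires diagonal reduction of \emph{every} matrix and the divisibility chain $d_i\mid d_{i+1}$; one should check that \cite{7ShW} delivers this full conclusion for $J$-Noetherian Bezout domains (typically through stable range $2$ or an adequate-type hypothesis), rather than a weaker Hermite-style property. Once that citation is confirmed, the proof writes itself: by Theorem~\ref{theor-8} the ring $R$ is $J$-Noetherian, hence by \cite{7ShW} it is an elementary divisor domain, and we are done.
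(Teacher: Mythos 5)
Your proposal matches the paper's argument exactly: the paper derives Theorem~\ref{theor-9} by combining Theorem~\ref{theor-8} (an almost zip Bezout domain is $J$-Noetherian) with the Shores--Wiegand result \cite{7ShW} that a commutative $J$-Noetherian Bezout domain is an elementary divisor ring. Your caution about verifying the exact form of the cited result is reasonable, but no difference in approach.
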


Since $J$-Noetherian Bezout domain is fractionally regular ring. We have the following result.

\begin{theorem}\label{theor-10}
    Almost zip Bezout domain is fractionally regular domain.
\end{theorem}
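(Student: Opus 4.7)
The plan is to chain the previous theorems together and then invoke the structural description of $J$-Noetherian Bezout domains that is referenced immediately before the statement. First I would apply Theorem~\ref{theor-8}, which already tells us that an almost zip commutative Bezout domain $R$ is a $J$-Noetherian domain. The remaining content of the proof is therefore to verify that every $J$-Noetherian Bezout domain satisfies the defining condition of fractional regularity, namely that $Q_{cl}(R/\mathrm{rad}(aR))$ is a von Neumann regular ring for each nonzero nonunit $a \in R$.

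To carry out that verification I would fix a nonzero nonunit $a \in R$ and exploit the $J$-Noetherian hypothesis in the form already used in Theorem~\ref{theor-8}: by the Estes--Ohm characterization cited there, $aR$ has only finitely many minimal primes $P_1,\dots,P_n$. Consequently $\mathrm{rad}(aR) = P_1 \cap \dots \cap P_n$, and the quotient $\overline{R} = R/\mathrm{rad}(aR)$ is a semiprime homomorphic image of $R$, hence a semiprime Bezout ring with exactly $n$ minimal primes $\overline{P}_i = P_i/\mathrm{rad}(aR)$.

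Next I would identify the total quotient ring of $\overline{R}$. The Chinese-remainder map $\overline{R} \hookrightarrow \prod_{i=1}^n R/P_i$ embeds $\overline{R}$ into a product of Bezout domains; by prime avoidance applied to the finite family $\{\overline{P}_1,\dots,\overline{P}_n\}$, the regular elements of $\overline{R}$ are exactly those avoiding every $\overline{P}_i$, and such elements exist abundantly. Localising at this multiplicative set produces the full product $\prod_{i=1}^n \mathrm{Frac}(R/P_i)$, which is a finite direct product of fields and in particular von Neumann regular. This establishes fractional regularity and completes the argument.

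The substantive obstacle is the last identification $Q_{cl}(\overline{R}) \cong \prod_{i=1}^n \mathrm{Frac}(R/P_i)$: one must verify that every regular element of $\overline{R}$ becomes a unit in the finite product and, conversely, that every element of the product is representable as $\bar{r}/\bar{s}$ with $\bar{s}$ regular in $\overline{R}$. Both facts rest on prime avoidance for the finite family of minimal primes, combined with the Bezout property, which permits the finitely generated ideals arising in the prime-avoidance argument to be replaced by principal generators. Once this identification is in hand, the theorem follows at once from the definition of a fractionally regular ring.
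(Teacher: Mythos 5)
Your overall strategy is sound, and it is essentially the route the paper intends: the paper gives no written proof of this theorem, only the one-line remark that a $J$-Noetherian Bezout domain is fractionally regular (together with the remark in the introduction, attributed to Anderson, that a ring in which every nonzero principal ideal has finitely many minimal primes is fractionally regular). Your third paragraph, identifying $Q_{cl}(R/\mathrm{rad}(aR))$ with the finite product $\prod_{i=1}^n \mathrm{Frac}(R/P_i)$, is a correct and self-contained verification of exactly the fact the paper leaves to a citation: for a reduced ring with finitely many minimal primes the zero-divisors are the union of those primes, the total quotient ring is zero-dimensional reduced with the $P_i$ as its only (maximal) primes, and the Chinese Remainder Theorem gives the product decomposition. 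That part is fine.

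The one genuine misstep is the direction in which you use Estes--Ohm. Theorem~\ref{theor-8} and the Estes--Ohm citation establish the implication ``finitely many primes minimal over each $aR$ $\Rightarrow$ $J$-Noetherian''; you invoke them for the converse, deducing from $J$-Noetherianness that $aR$ has finitely many minimal primes. That converse is not proved in the paper and is not obviously available for $J$-Noetherian as defined here (ACC on intersections of maximal ideals), since a prime minimal over $aR$ need not be a $J$-radical ideal. The detour is unnecessary and easily repaired: Theorem~\ref{theor-7} already states directly that for an almost zip element $a$ there are only finitely many primes minimal over $aR$, and in an almost zip domain every nonzero nonunit is such an element. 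Replace your appeal to Theorem~\ref{theor-8} and Estes--Ohm with an appeal to Theorem~\ref{theor-7}, and the rest of your argument goes through unchanged.
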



\begin{thebibliography}{99}

\bibitem{5Anderson} D. Anderson, \textit{A note on minimal prime ideals},  Proc. Amer. Math. Soc.  \textbf{122} (1994), no. 1,  13--14.

\bibitem{8EstesOhm}
    D. Estes and J. Ohm,  \textit{Stable range in commutative rings},   J. Alg. \textbf{7} (1967), no. 3, 343--362.

\bibitem{4Faith} C. Faith, \textit{Rings with zero intersection  property on annihilators: zip rings}, Publ. Matem. \textbf{33} (1989), no. 2, 329--338.

\bibitem{2KaplEDMod} I. Kaplansky, \textit{Elementary divisors and modules}, Trans. Amer. Math. Soc. \textbf{66} (1949), 464--491.

\bibitem{6Kaplansky} I.  Kaplansky, \textit{Commutative ringsn}, Univ. of  Chicago Press, Chicago, IL, 1974.

\bibitem{7ShW} T. Shores and R. Wiegand,  \textit{Decompositions of modules and matrices},  Bull. Amer. Math. Soc.  \textbf{79} (1973), no. 6, 1277--1280.

\bibitem{3Zabavsky} B. Zabavsky, \textit{Fractionally regular Bezout rings}, Matem. Stud. \textbf{32}(2009), no. 1,  76--80.

\bibitem{1Zelmanowitz} J. Zelmanowitz, \textit{The finite intersection property on annihilator right ideals}, Proc. Amer. Math. Soc. \textbf{57} (1976), no. 2, 213--216.
\end{thebibliography}
\end{document}